\nonstopmode \numberwithin{equation}{section}
\newtheorem{thm}{Theorem}[section]
\newtheorem{cor}{Corollary}[section]
\newtheorem{lem}{Lemma}[section]
\theoremstyle{definition}
\newenvironment{customthm}[1]
  {\innercustomthm}
  {\endinnercustomthm}
\newcounter{minutes}\setcounter{minutes}{\time}
\newcounter{hours}\setcounter{hours}{\time}
\newcounter {own}
\def\theown {\thesection       .\arabic{own}}
\newcounter{alphabet}
\begin{document}

\title{Schwarzian Norm Estimates for Some Classes of Analytic Functions}

\author{Md Firoz Ali}
\address{Md Firoz Ali,
Department of Mathematics, National Institute of Technology Durgapur,
Durgapur- 713209, West Bengal, India.}
\email{ali.firoz89@gmail.com, firoz.ali@maths.nitdgp.ac.in}

\author{Sanjit Pal}
\address{Sanjit Pal,
Department of Mathematics, National Institute of Technology Durgapur,
Durgapur- 713209, West Bengal, India.}
\email{palsanjit6@gmail.com}

\subjclass[2010]{Primary 30C45, 30C55}
\keywords{univalent functions; starlike functions; convex function in some direction; Schwarzian norm; two point distortion}

\def\thefootnote{}
\footnotetext{ {\tiny File:~\jobname.tex,
printed: \number\year-\number\month-\number\day,
          \thehours.\ifnum\theminutes<10{0}\fi\theminutes }
} \makeatletter\def\thefootnote{\@arabic\c@footnote}\makeatother

\begin{abstract}
Let $\mathcal{A}$ denote the class of analytic functions $f$ in the unit disk $\mathbb{D}=\{z\in\mathbb{C}:|z|<1\}$ normalized by $f(0)=0$, $f'(0)=1$. In the present article, we obtain the sharp estimates of the Schwarzian norm for functions in the classes $\mathcal{G}(\beta)=\{f\in \mathcal{A}:{\rm Re\,}[1+zf''(z)/f'(z)]<1+\beta/2\}$, where $\beta>0$ and $\mathcal{F}(\alpha)=\{f\in \mathcal{A}:{\rm Re\,}[1+zf''(z)/f'(z)]>\alpha\}$, where $-1/2\le \alpha\le 0$. We also establish two-point distortion theorem for functions in the classes $\mathcal{G}(\beta)$ and $\mathcal{F}(\alpha)$.
\end{abstract}

\thanks{}

\maketitle
\pagestyle{myheadings}
\markboth{Md Firoz Ali and Sanjit Pal}{Schwarzian Norm Estimates For Some Classes of Analytic Functions}

\section{Introduction}
Let $\mathcal{H}$ denote the class of analytic functions in the unit disk $\mathbb{D}=\{z\in\mathbb{C}:|z|<1\}$, and  $\mathcal{LU}$ denote the subclass of $\mathcal{H}$ consisting of all locally univalent functions, namely $\mathcal{LU}=\{f\in\mathcal{H}: f'(z)\ne 0, z\in\mathbb{D}\}$. The Schwarzian derivative for a locally univalent function $f\in\mathcal{LU}$ is defined by
$$S_f(z)=\left[\frac{f''(z)}{f'(z)}\right]^{'}-\frac{1}{2}\left[\frac{f''(z)}{f'(z)}\right]^2$$
and the Schwarzian norm (the hyperbolic sup-norm) of $f\in\mathcal{LU}$ is defined by
$$||S_f||=\sup\limits_{z\in\mathbb{D}}(1-|z|^2)^2|S_f(z)|.$$
For a locally univalent function $f\in\mathcal{LU}$, Nehari \cite{Nehari-1949} proved that if $||S_f||\leq 2$ then the function $f$ is univalent in $\mathbb{D}$. On the other hand, for a univalent function $f$, $||S_f||\leq 6$ (see \cite{Nehari-1949}). Both of the constants $2$ and $6$ are best possible.\\

The Schwarzian norm has a significant meaning in the theory of quasiconformal mappings and Teichm\"{u}ller space (see \cite{Lehto-1987}). A mapping $f:\widehat{\mathbb{C}}\rightarrow\widehat{\mathbb{C}}$ of the Riemann sphere $\widehat{\mathbb{C}}=\mathbb{C}\cup\{\infty\}$ is said to be $k$-quasiconformal ($0\le k<1$) mapping if $f$ is a sense preserving homeomorphism of $\widehat{\mathbb{C}}$ and has locally integrable partial derivatives on $\mathbb{C}\setminus\{f^{-1}(\infty)\}$ with $|f_{\bar{z}}|\le k|f_z|$ a.e.. On the other hand, the theory of Teichm\"{u}ler space $\mathcal{T}$ can be identified with the set of Schwarzian derivatives of analytic and univalent functions on $\mathbb{D}$ with quasiconformal extensions to $\widehat{\mathbb{C}}$. It is known that $\mathcal{T}$ is a bounded domain in the Banach space of analytic functions in $\mathbb{D}$ with finite hyperbolic sup-norm (see \cite{Lehto-1987}). Key results connecting the Schwarzian derivative and quasiconformal mappings are given in the following theorem.

\begin{customthm}{A}\label{Thm-q-0001}\cite{Ahlfors-Weill-1962, Kuhnau-1971}
If $f$ extends to a $k$-quasiconformal ($0\le k<1$) mapping of the Riemann sphere $\widehat{\mathbb{C}}$ then $||S_f||\le 6k$. Conversely, if $||S_f||\le 2k$ then $f$ extends to a $k$-quasiconformal mapping of the Riemann sphere $\widehat{\mathbb{C}}$.
\end{customthm}

Let $\mathcal{A}$ denote the class of functions $f$ in $\mathcal{H}$ normalized by $f(0)=0$, $f'(0)=1$. Therefore, every function $f$ in $\mathcal{A}$ has the Taylor series expansion of the form
\begin{equation}\label{q-00001}
f(z)=z+\sum\limits_{n=2}^{\infty}a_nz^n.
\end{equation}
Let $\mathcal{S}$ be the set of all functions $f$ in $\mathcal{A}$ that are univalent in $\mathbb{D}$. A function $f\in\mathcal{A}$ is called starlike (respectively, convex) if the image $f(\mathbb{D})$ is a starlike domain with respect to the origin (respectively, convex). The classes of all starlike and convex functions that are univalent are denoted by $\mathcal{S^*}$ and $\mathcal{C}$, respectively. It is well known that a function $f$ in $\mathcal{A}$ is starlike (respectively, convex) if and only if ${\rm Re\,} [zf'(z)/f(z)] > 0$ (respectively, ${\rm Re\,} [1 + zf''(z)/f'(z)] > 0$) for $z\in\mathbb{D}$.
A function $f$ in $\mathcal{A}$ is said to be starlike of order $\alpha$, $0\le \alpha< 1$ if ${\rm Re\,} [zf'(z)/f(z)] > \alpha$ for $z\in\mathbb{D}$ and is said to be convex of order $\alpha$, $0\le \alpha< 1$ if ${\rm Re\,} [1 + zf''(z)/f'(z)] > \alpha$ for $z\in\mathbb{D}$. The class of all starlike and convex functions of order $\alpha$ is denoted by $\mathcal{S}^*(\alpha)$ and $\mathcal{C}(\alpha)$, respectively. Clearly, a function $f$ in $\mathcal{A}$ belongs to $\mathcal{C}(\alpha)$ if and only if $zf'\in\mathcal{S}^*(\alpha)$. For further information on these classes, we refer to \cite{Duren-1983,Goodman-book-1983}.\\

In this article, we are concerned with two different classes of functions $\mathcal{G}(\beta)$, $\beta>0$ and $\mathcal{F}(\alpha)$, $-1/2\le \alpha\le 0$ defined by
$$\mathcal{G}(\beta)=\Bigg\{f\in \mathcal{A}:{\rm Re\,}\left(1+\frac{zf''(z)}{f'(z)}\right)<1+\frac{\beta}{2}\quad\text{for}~z\in\mathbb{D}\Bigg\}$$
and
$$\mathcal{F}(\alpha)=\Bigg\{f\in \mathcal{A}:{\rm Re\,}\left(1+\frac{zf''(z)}{f'(z)}\right)>\alpha\quad\text{for}~z\in\mathbb{D}\Bigg\}.$$
In 1941, Ozaki \cite{Ozaki-1941} introduced the class $\mathcal{G}:=\mathcal{G}(1)$ and proved that functions in $\mathcal{G}$ are univalent in $\mathbb{D}$. Later on, Umezawa \cite{Umezawa-1952} studied the class $\mathcal{G}$ and showed that functions in $\mathcal{G}$ are convex in one direction, i.e. every $f\in\mathcal{G}$ maps $|z|=\rho<r$ for every $\rho$ near $r$ into a contour which may be cut by every straight line parallel to this direction in not more than two points. Moreover, functions in $\mathcal{G}$ are starlike in $\mathbb{D}$ (see \cite{Jovanovic-Obradovic-1995}, \cite{Ponnusamy-Rajasekaran-1995}).
Thus, the class $\mathcal{G}(\beta)$ is included in $\mathcal{S}^*$ whenever $\beta\in (0,1]$. One can easily show that functions in $\mathcal{G}(\beta)$ are not univalent in $\mathbb{D}$ for $\beta>1$. For $0<\beta \le 2/3$, the class $\mathcal{G}(\beta)$ was studied by Uralegaddi {\it et al.} \cite{Uralegaddi-Ganigi-Sarangi-1994}. Later, the full class was studied in \cite{Ali-Vasudevarao-2016, Owa-Nishiwaki-2002, Owa-Srivastava-2002, Ponnusamy-Vasudevarao-2007}. On the other hand, for $\alpha=0$, $\mathcal{F}(0)=:\mathcal{C}$ is the class of convex functions. For $\alpha=-1/2$, functions in the class $\mathcal{F}(-1/2)$ are not necessarily starlike but are convex in some direction and so are close-to-convex. Here, we recall that a function $f\in\mathcal{A}$ is called close-to-convex if $f(\mathbb{D})$ is close-to-convex domain, i.e. the complement of $f(\mathbb{D})$ in $\mathbb{C}$ is the union of closed half lines with pairwise disjoint interiors. Pfaltzgraff et al. \cite{Pfaltzgraff-Reade-Umezawa-1976} proved that $\mathcal{F}(\alpha)$ contains non-starlike functions for all $-1/2 \le \alpha < 0$.\\

Although the fundamental work on Schwarzian  derivative in connection with geometric functions theory have been done in \cite{Ahlfors-Weill-1962, Kuhnau-1971, Nehari-1949}); not much research has been done on Schwarzian  derivative for various subclasses of univalent functions. In connection with Teichm\"{u}ller spaces, it is an interesting problem to estimate the norm of the Schwarzian derivatives for typical subclasses of univalent functions. For the class of convex functions $\mathcal{C}$, the Schwarzian norm satisfies $||S_f||\le 2$ and the estimate is sharp. This result was proved repeatedly by many researchers (see \cite{Lehto-1977, Nehari-1976, Robertson-1969}). In 1996, Suita \cite{Suita-1996} studied the class $\mathcal{C}(\alpha)$, $0\le \alpha\le 1$ and using the integral representation of functions in $\mathcal{C}(\alpha)$ proved that the Schwarzian norm satisfies the following sharp inequality
$$||S_f||\le
\begin{cases}
2 & \text{ if }~ 0\le \alpha\le 1/2,\\
8\alpha(1-\alpha) & \text{ if }~ 1/2< \alpha\le 1.
\end{cases}
$$

A function $f$ in $\mathcal{A}$ is said to be strongly starlike (respectively, strongly convex) of order $\alpha$, $0<\alpha\le 1$ if $|\arg\{zf'(z)/f(z)\}|<\pi\alpha/2$ (respectively, $|\arg\{1+zf''(z)/f'(z)\}|<\pi\alpha/2$) for $z\in\mathbb{D}$. The class of all strongly starlike and strongly convex functions of order $\alpha$ is denoted by $\mathcal{S}^*_{\alpha}$ and $\mathcal{K}_{\alpha}$, respectively. In 1989, Mocanu \cite{Mocanu-1989} proved that $\mathcal{K}_{\gamma(\beta)}\subset \mathcal{S}^*_{\beta}$ for $0<\beta<1$ where
$$
\gamma(\beta):=\frac{2}{\pi} \arctan\left[\tan \frac{\pi\beta}{2} + \frac{\beta}{(1+\beta)^{(1+\beta)/2} (1-\beta)^{(1-\beta)/2}\cos (\pi\beta/2)} \right].
$$
In other words, $\mathcal{K}_{\alpha}\subset \mathcal{S}^*_{\gamma^{-1}(\alpha)}$ for $0<\alpha< 1$, where $\gamma^{-1}$ denotes the
inverse function of $\gamma:[0, 1]\to [0, 1]$. Note that $\gamma(\beta)$ increases from $0$ to $1$ when $\beta$ varies from $0$ to $1$.
For $0<\alpha<1$, Fait {\it et al.} \cite{Fait-Krzyz-Zygmunt-1976} proved that every function $f$ in $\mathcal{S}^*_{\alpha}$ can be extended to a $\sin(\pi\alpha/2)$-quasiconformal mapping of $\widehat{\mathbb{C}}$.
Therefore, by Theorem \ref{Thm-q-0001}, it easily follows that $||S_f||\le 6\sin(\pi\alpha/2)$ which was pointed out by Chiang \cite{Chiang-1991}. By Theorem \ref{Thm-q-0001}, this also implies that a function $f\in\mathcal{K}_{\alpha}$ extends to a $\sin(\pi\gamma^{-1}(\alpha)/2)$-quasiconformal mapping of $\widehat{\mathbb{C}}$ and satisfies $||S_f||\le 6\sin(\pi\gamma^{-1}(\alpha)/2)$. Later, Kanas and Sugawa \cite{Kanas-Sugawa-2011} studied the Schwarzian norm for the class $\mathcal{K}_{\alpha}$ of strongly convex functions by means a different method and proved the sharp inequality $||S_f||\leq 2\alpha$ for $f\in\mathcal{K}_{\alpha}$ which by Theorem \ref{Thm-q-0001}, also shows that every functions in $\mathcal{K}_{\alpha}$ can be extended to an $\alpha$-quasiconformal mapping of $\widehat{\mathbb{C}}$. Here we note that $\sin(\pi\gamma^{-1}(\alpha)/2)<\alpha$ when
$0 <\alpha< 0.3354$ and so Kanas and Sugawa \cite{Kanas-Sugawa-2011} obtained a better bound than in \cite{Fait-Krzyz-Zygmunt-1976} when $\alpha> 0.3355$.\\

A function $f\in \mathcal{A}$ is said to be uniformly convex function if every circular arc (positively oriented) of the form $\{z\in\mathbb{D}: |z -\xi|=r\}$, $\xi\in\mathbb{D}$, $0<r<|\xi|+1$ is mapped by $f$ univalently onto a convex arc. The class of all uniformly convex functions is denoted by $\mathcal{UCV}$. In particular, $\mathcal{UCV}\subset\mathcal{K}$.
It is well known that (see \cite{Goodman-book-1983}) a function $f\in \mathcal{A}$ is uniformly convex if and only if
$${\rm Re\,}\left(1+\frac{zf''(z)}{f'(z)}\right)> \left|\frac{zf''(z)}{f'(z)}\right|~~\text{for}~ z\in\mathbb{D}.$$
Kanas and Sugawa \cite{Kanas-Sugawa-2011} proved that the Schwarzian norm satisfies $||S_f||\le 8/\pi^2$ for all $f\in\mathcal{UCV}$ and the estimate is sharp. In 2012, Bhowmik and Wirths \cite{Bhowmik-Wirths-2012} studied the class of concave functions $\mathcal{C}o(\alpha)$ for $1\le \alpha\le 2$ and obtained the sharp estimate $||S_f||\le 2(\alpha^2-1)$ for $f\in\mathcal{C}o(\alpha)$. They also proved that $f$ extends to an $(\alpha^2-1)$-quasiconformal mapping of $\widehat{\mathbb{C}}$ when $1\le \alpha<\sqrt{2}$.\\

In this article, our main aim is to find estimates for the modulus of the Schwarzian derivative for functions in $\mathcal{G}(\beta)$, $\beta>0$ and $\mathcal{F}(\alpha)$, $-1/2\le \alpha\le 0$. These results will yield a sharp estimate of the Schwarzian norm for functions in these classes, which will help us to comment on quasiconformal extension of these functions. This also lead us to find a pair of two-point
distortion conditions of such mappings.

\section{Main Results}

Let $\mathcal{B}$ be the class of analytic functions $\omega:\mathbb{D}\rightarrow\mathbb{D}$ and $\mathcal{B}_0$ be the class of Schwarz functions $\omega\in\mathcal{B}$ with $\omega(0)=0$. The Schwarz's lemma states that a function $\omega\in\mathcal{B}_0$ satisfies $|\omega(z)|\le |z|$ and $|\omega'(0)|\le 1$. The equality occurs in any one of the inequalities if and only if $\omega(z)=e^{i\alpha}z$, $\alpha\in\mathbb{R}$. A natural extension of Schwarz lemma, known as Schwarz-Pick lemma, gives the estimate $|\omega'(z)|\le (1-|\omega(z)|^2)/(1-|z|^2)$, $z\in\mathbb{D}$ when $\omega\in\mathcal{B}$. In 1931, Dieudonn\'{e} \cite{Dieudonne-1931} first obtained the exact region of variability of $\omega'(z_0)$ for a fixed $z_0\in\mathbb{D}$ over the class $\mathcal{B}_0$.

\begin{lem}[Dieudonn\'{e}'s lemma]\cite{Dieudonne-1931, Duren-1983}
Let $\omega\in\mathcal{B}_0$ and $z_0\ne 0$ be a fixed point in $\mathbb{D}$. The region of variability of $\omega'(z_0)$ is given by
\begin{equation}\label{q-00005}
\left|\omega'(z_0)-\frac{\omega(z_0)}{z_0}\right|\le \frac{|z_0|^2-|\omega(z_0)|^2}{|z_0|(1-|z_0|^2)}.
\end{equation}
Moreover, the equality occurs in \eqref{q-00005} if and only if $\omega$ is a Blaschke product of degree $2$ fixing $0$.
\end{lem}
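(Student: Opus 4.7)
The plan is to reduce the inequality \eqref{q-00005} to the Schwarz--Pick lemma via the auxiliary function
$$\psi(z) = \frac{\omega(z)}{z}.$$
Since $\omega(0) = 0$ and $\omega \in \mathcal{B}_0$, the function $\psi$ extends analytically to $\mathbb{D}$, and Schwarz's lemma gives $|\psi(z)| \le 1$ throughout $\mathbb{D}$. Hence $\psi \in \mathcal{B}$ and the Schwarz--Pick estimate applies at the point $z_0$.

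A direct computation expresses $\psi(z_0)$ and $\psi'(z_0)$ in terms of $\omega(z_0)$ and $\omega'(z_0)$: namely $\psi(z_0) = \omega(z_0)/z_0$ and $\psi'(z_0) = \bigl(\omega'(z_0) - \omega(z_0)/z_0\bigr)/z_0$. Substituting these into the Schwarz--Pick inequality for $\psi$ at $z_0$ and multiplying both sides by $|z_0|$ yields the desired bound \eqref{q-00005} after routine simplification of the right-hand side (the $|z_0|^2$ in the denominator combines with the factor $|z_0|$ to give $|z_0|(1-|z_0|^2)$).

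For the equality characterization, equality in \eqref{q-00005} is equivalent to equality in the Schwarz--Pick estimate for $\psi$ at $z_0$. If $|\psi(z_0)| = 1$, then the maximum modulus principle forces $\psi \equiv e^{i\alpha}$ to be a unimodular constant, whence $\omega(z) = e^{i\alpha} z$, the degenerate subcase in which both sides of \eqref{q-00005} vanish. Otherwise, $|\psi(z_0)| < 1$, and equality at an interior point of the Schwarz--Pick estimate forces $\psi$ to be a disk automorphism,
$$\psi(z) = e^{i\theta}\frac{z - a}{1 - \bar{a} z}, \qquad a \in \mathbb{D}, \ \theta \in \mathbb{R}.$$
Thus $\omega(z) = z\psi(z) = e^{i\theta} z(z - a)/(1 - \bar{a} z)$ is a Blaschke product of degree $2$ fixing the origin (allowing $a=0$, which gives $\omega(z) = e^{i\theta} z^2$).

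The only subtle point, which I would flag as the main obstacle, is formulating the equality statement cleanly: the trivial case $\omega(z) = e^{i\alpha} z$ satisfies \eqref{q-00005} with both sides zero without being a Blaschke product of degree $2$, so the ``if and only if'' in the lemma must be read modulo this degeneracy. Everything else is careful bookkeeping in the Schwarz--Pick estimate and its equality case.
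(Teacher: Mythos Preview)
The paper does not supply its own proof of Dieudonn\'e's lemma; it is quoted from \cite{Dieudonne-1931, Duren-1983} and used as a black box in the proofs of Theorems~\ref{Thm-q-0005} and~\ref{Thm-q-0010}. So there is no argument in the paper to compare against, and the relevant question is simply whether your proposal is correct.

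It is. Passing to $\psi(z)=\omega(z)/z$ and invoking Schwarz--Pick is exactly the classical route (this is, in fact, the proof given in Duren's book). Your computation $\psi'(z_0)=\bigl(\omega'(z_0)-\omega(z_0)/z_0\bigr)/z_0$ is correct, and substituting $|\psi(z_0)|^2=|\omega(z_0)|^2/|z_0|^2$ into the Schwarz--Pick bound $|\psi'(z_0)|\le (1-|\psi(z_0)|^2)/(1-|z_0|^2)$ gives \eqref{q-00005} after multiplying through by $|z_0|$. The equality analysis is also right: strict equality in Schwarz--Pick forces $\psi$ to be a disk automorphism, hence $\omega(z)=z\psi(z)$ is a degree-$2$ Blaschke product with a zero at the origin.

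Your flagged caveat about the degenerate case $\omega(z)=e^{i\alpha}z$ is well taken: there both sides of \eqref{q-00005} vanish, yet $\omega$ has degree $1$. The lemma as stated in the paper tacitly reads the ``only if'' modulo this trivial case (alternatively, one may regard $e^{i\alpha}z$ as a limiting Blaschke product with a double zero at $0$ coalescing with one at the boundary). For the applications in the paper this point is immaterial, since only the inequality \eqref{q-00005} itself is used in the estimates, and the sharpness arguments construct explicit nondegenerate degree-$2$ Blaschke products.
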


The Dieudonn\'{e}'s lemma is an improvement of the Schwarz's lemma as well as Schwarz-Pick lemma. Here, we remark that a Blaschke product of degree $n\in\mathbb{N}$ is of the form
$$B(z)=e^{i\theta}\prod_{j=1}^{n}\frac{z-z_j}{1-\bar{z_j}z},\quad z,z_j\in\mathbb{D}, ~\theta\in\mathbb{R}.$$
The Dieudonn\'{e}'s lemma will play a crucial role to prove our main results.\\

Before we state our main result, let us recall another important and useful tool known as the differential subordination technique. Many problems in geometric function theory can be solved in a simple and sharp manner with the help of differential subordination.
A function $f\in\mathcal{H}$ is said to be subordinate to another function $g\in\mathcal{H}$ if there exists an analytic function $\omega\in\mathcal{B}_0$ such that $f(z)=g(\omega(z))$ and it is denoted by $f\prec g$. Moreover, when $g$ is univalent, $f\prec g$ if and only if $f(0)=g(0)$ and $f(\mathbb{D})\subset g(\mathbb{D})$. In terms of subordination, the classes $\mathcal{G}(\beta)$ and $\mathcal{F}(\alpha)$ can be defined as:
\begin{equation}\label{q-00010}
f\in\mathcal{G}(\beta)\iff 1+\frac{zf''(z)}{f'(z)}\prec \frac{1-(1+\beta)z}{1-z}
\end{equation}
and
\begin{equation}\label{q-00015}
f\in\mathcal{F}(\alpha)\iff 1+\frac{zf''(z)}{f'(z)}\prec \frac{1+(1-2\alpha)z}{1-z}.
\end{equation}

\begin{thm}\label{Thm-q-0005}
For $\beta>0$, let $f\in\mathcal{G}(\beta)$ be of the form \eqref{q-00001}. Then the Schwarzian derivative and Schwarzian norm satisfy the inequalities
$$|S_f(z)|\le  \frac{\beta(2+\beta)}{2(1-|z|)^2} \quad\text{and}\quad ||S_f||\le 2\beta(2+\beta).$$
Moreover, the equality occurs in both the inequalities for the function $f_0(z)$ defined by
 $$f_0(z)=\frac{1-(1-z)^{1+\beta}}{1+\beta}.$$
\end{thm}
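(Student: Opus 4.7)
The plan is to parametrize the class $\mathcal{G}(\beta)$ by a Schwarz function and then apply Dieudonn\'e's lemma. By the subordination \eqref{q-00010}, there exists $\omega\in\mathcal{B}_0$ such that
$$1+\frac{zf''(z)}{f'(z)}=\frac{1-(1+\beta)\omega(z)}{1-\omega(z)}=1-\frac{\beta\omega(z)}{1-\omega(z)},$$
so $f''(z)/f'(z)=-\beta\omega(z)/[z(1-\omega(z))]$. A direct calculation of $S_f=(f''/f')'-\tfrac{1}{2}(f''/f')^2$ then yields
$$S_f(z)=\frac{\beta}{2z^2(1-\omega(z))^2}\bigl[2\omega(z)-2z\omega'(z)-(2+\beta)\omega(z)^2\bigr].$$

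Next I apply Dieudonn\'e's lemma, which gives $z\omega'(z)=\omega(z)+z\zeta$ with $|z\zeta|\le (|z|^2-|\omega(z)|^2)/(1-|z|^2)$, so that $|2\omega(z)-2z\omega'(z)|\le 2(|z|^2-|\omega(z)|^2)/(1-|z|^2)$. Writing $r=|z|$ and $s=|\omega(z)|\le r$, and using $|1-\omega(z)|\ge 1-s$ together with the triangle inequality, one obtains
$$|S_f(z)|\le \frac{\beta}{2\,r^2(1-s)^2}\left[\frac{2(r^2-s^2)}{1-r^2}+(2+\beta)s^2\right].$$

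The main obstacle is to verify that the right-hand side is bounded by $\beta(2+\beta)/[2(1-r)^2]$ for every $0\le s\le r<1$. I would clear denominators, employ the factorization $r^2(1-s)^2-s^2(1-r)^2=(r-s)(r+s-2rs)$, divide out the nonnegative factor $r-s$, and rearrange to the equivalent inequality $(r+s)\bigl[4r+\beta(1+r)\bigr]\ge 2rs(2+\beta)(1+r)$. Applying the elementary bound $2rs\le r(r+s)$ (valid since $s\le r$) reduces this further to $(1-r)\bigl[2r+\beta(1+r)\bigr]\ge 0$, which is immediate. Once the pointwise estimate is established, the norm bound follows from $(1-|z|^2)^2|S_f(z)|\le \beta(2+\beta)(1+|z|)^2/2\le 2\beta(2+\beta)$. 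For sharpness, a direct computation with $f_0'(z)=(1-z)^\beta$ gives $f_0''(z)/f_0'(z)=-\beta/(1-z)$ and hence $S_{f_0}(z)=-\beta(2+\beta)/[2(1-z)^2]$, which attains the pointwise bound at $z=r\in(0,1)$ and satisfies $\|S_{f_0}\|=2\beta(2+\beta)$ as $r\to 1^-$.
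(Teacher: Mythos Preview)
Your proof is correct and follows essentially the same strategy as the paper: parametrize via the Schwarz function $\omega$, compute $S_f$ in terms of $\omega$ and $\omega'$, and apply Dieudonn\'e's lemma to reach the identical intermediate bound
\[
|S_f(z)|\le \frac{\beta}{2r^2(1-s)^2}\left[\frac{2(r^2-s^2)}{1-r^2}+(2+\beta)s^2\right],\qquad r=|z|,\ s=|\omega(z)|.
\]
The only deviation is in how you pass from this bound to $\beta(2+\beta)/[2(1-r)^2]$: the paper treats the right-hand side as a function $g(s)$, computes $g'(s)$, and shows $g'(s)>0$ on $(0,r]$ (with a short case split on the sign of $\beta-\beta r^2-2r^2$), so the maximum occurs at $s=r$; you instead verify the inequality directly by clearing denominators, factoring $r^2(1-s)^2-s^2(1-r)^2=(r-s)(r+s-2rs)$, and reducing via $2rs\le r(r+s)$ to $(1-r)[2r+\beta(1+r)]\ge 0$. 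Both arguments are elementary and of comparable length; yours avoids the case distinction at the cost of a slightly less transparent reason for why $s=r$ is extremal. The sharpness discussion via $f_0$ is the same in both.
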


\begin{proof}
For $\beta>0$, let $f\in\mathcal{G}(\beta)$ be of the form \eqref{q-00001}. Then from \eqref{q-00010}, we have
$$1+\frac{zf''(z)}{f'(z)}\prec \frac{1-(1+\beta)z}{1-z}$$
and so there exists an analytic function $\omega:\mathbb{D}\rightarrow\mathbb{D}$ with $\omega(0)=0$ such that
$$1+\frac{zf''(z)}{f'(z)}= \frac{1-(1+\beta)\omega(z)}{1-\omega(z)}.$$
A simple computation gives
$$\frac{f''(z)}{f'(z)}=-\frac{\beta\omega(z)}{z(1-\omega(z))}$$
and therefore,
\begin{align}\label{q-000001}
S_f(z)&=\left[\frac{f''(z)}{f'(z)}\right]^{'}-\frac{1}{2}\left[\frac{f''(z)}{f'(z)}\right]^2\\
&=-\beta\left[\frac{z\omega'(z)-\omega(z)}{z^2(1-\omega(z))^2}+\frac{(2+\beta)\omega^2(z)}{2z^2(1-\omega(z))^2}\right].\nonumber
\end{align}
Let us consider the transformation $\displaystyle\zeta(z)=\omega'(z)-\frac{\omega(z)}{z}$. By \eqref{q-00005}, the function $\zeta$ varies over the closed disk
$$|\zeta|\le \frac{|z|^2-|\omega|^2}{|z|(1-|z|^2)}$$
for fixed $|z|<1$. Using the transformation of $\zeta(z)$ in \eqref{q-000001}, we obtain
$$S_f(z)=-\beta\left[\frac{\zeta(z)}{z(1-\omega(z))^2}+\frac{(2+\beta)\omega^2(z)}{2z^2(1-\omega(z))^2}\right]$$
and hence
\begin{align*}
|S_f(z)|&\le \beta\left[\frac{|\zeta(z)|}{|z||1-\omega(z)|^2}+\frac{(2+\beta)|\omega^2(z)|}{2|z|^2|1-\omega(z)|^2}\right]\\
&\le\beta\left[\frac{|z|^2-|\omega(z)|^2}{|z|^2(1-|z|^2)|1-\omega(z)|^2}+\frac{(2+\beta)|\omega(z)|^2}{2|z|^2|1-\omega(z)|^2}\right].
\end{align*}
For $0<s:=|\omega(z)|\le |z|<1$, we have
\begin{align}\label{q-000005}
|S_f(z)|&\le \beta\left[\frac{|z|^2-s^2}{|z|^2(1-|z|^2)(1-s)^2}+\frac{(2+\beta)s^2}{2|z|^2(1-s)^2}\right]\\
&=\beta~\frac{2|z|^2+s^2(\beta-\beta |z|^2-2|z|^2)}{2|z|^2(1-|z|^2)(1-s)^2}=\beta g(s),\nonumber
\end{align}

where $$g(s)=\frac{2|z|^2+s^2(\beta-\beta |z|^2-2|z|^2)}{2|z|^2(1-|z|^2)(1-s)^2}, \quad 0< s\le |z|.$$
Therefore,
\begin{equation*}
g'(s)=\frac{2|z|^2+s(\beta-\beta |z|^2-2|z|^2)}{|z|^2(1-|z|^2)(1-s)^3}.
\end{equation*}
We claim that $g'(s)>0$ for $0<s\le |z|$. If $\beta-\beta |z|^2-2|z|^2\ge 0$, then clearly $g'(s)>0$. If $\beta-\beta |z|^2-2|z|^2<0$, then using $0< s\le |z|$, we have
\begin{align*}
2|z|^2+s(\beta-\beta |z|^2-2|z|^2)&\ge 2|z|^2+|z|(\beta-\beta |z|^2-2|z|^2)\\
&= |z|(1-|z|)(\beta+2|z|+\beta|z|)>0.
\end{align*}
This lead us to conclude that $g'(s)>0$ for all $0<s\le |z|$. Thus, $g(s)$ attain its maximum at the point $s=|z|$ and so from \eqref{q-000005}, we have
$$|S_f(z)|\le \beta g(|z|)= \frac{\beta(2+\beta)}{2(1-|z|)^2}.$$
Therefore,
\begin{align*}
||S_f||&=\sup\limits_{z\in\mathbb{D}}(1-|z|^2)^2|S_f(z)|\le \frac{\beta(2+\beta)}{2}\sup\limits_{z\in\mathbb{D}}(1+|z|)^2\\
&=2\beta(2+\beta).
\end{align*}
To show that the estimates are sharp, let us consider the function $f_0(z)$ defined by
$$f_0(z)=\frac{1-(1-z)^{1+\beta}}{1+\beta}.$$
A simple computation gives
$$ S_{f_0}(z)=-\frac{\beta (2+\beta)}{2(1-z)^2}$$
and
$$
||S_{f_0}||=\sup\limits_{z\in\mathbb{D}}(1-|z|^2)^2|S_{f_0}(z)|=\beta (2+\beta)\sup\limits_{z\in\mathbb{D}}\frac{(1-|z|^2)^2}{2|1-z|^2}.
$$
On the positive real axis, we note that
$$ \beta (2+\beta)\sup\limits_{0<r<1}\frac{(1-r^2)^2}{2(1-r)^2}=2\beta (2+\beta).$$
Thus, $$||S_{f_0}||=2\beta (2+\beta).$$
\end{proof}

\begin{cor}
If $f\in\mathcal{G}:=\mathcal{G}(1)$ be of the form \eqref{q-00001}, then the Schwarzian norm satisfies the sharp inequality $$||S_f||\le 6.$$
\end{cor}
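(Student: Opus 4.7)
The corollary is an immediate specialization of Theorem \ref{Thm-q-0005} to the case $\beta = 1$, so the plan is simply to substitute. Setting $\beta = 1$ in the general Schwarzian norm estimate $\|S_f\| \le 2\beta(2+\beta)$ produced in Theorem \ref{Thm-q-0005} yields $\|S_f\| \le 2 \cdot 1 \cdot (2+1) = 6$, which is the claimed bound.

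For sharpness, I would invoke the extremal function from Theorem \ref{Thm-q-0005} with $\beta = 1$, namely
$$f_0(z) = \frac{1 - (1-z)^{2}}{2} = z - \frac{z^2}{2},$$
which lies in $\mathcal{G}(1) = \mathcal{G}$ (one checks trivially that $1 + zf_0''(z)/f_0'(z) = 1/(1-z)$ has real part less than $3/2$ on $\mathbb{D}$). The same computation used in Theorem \ref{Thm-q-0005} then gives $S_{f_0}(z) = -3/[2(1-z)^2]$, and evaluating along the positive real axis shows that the supremum defining $\|S_{f_0}\|$ equals $6$.

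There is no real obstacle here; the corollary is a direct substitution, and the only thing to verify is that the extremal function of Theorem \ref{Thm-q-0005} at $\beta = 1$ still witnesses equality, which it does by construction.
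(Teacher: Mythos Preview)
Your approach is correct and matches the paper's: the corollary is stated there as an immediate consequence of Theorem~\ref{Thm-q-0005}, and your substitution $\beta=1$ together with the extremal function $f_0$ is exactly what is intended.

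One small computational slip in your parenthetical check: for $f_0(z)=z-z^2/2$ one has $f_0'(z)=1-z$ and $f_0''(z)=-1$, so
\[
1+\frac{zf_0''(z)}{f_0'(z)}=1-\frac{z}{1-z}=\frac{1-2z}{1-z},
\]
not $1/(1-z)$. The correct expression $(1-2z)/(1-z)$ is the M\"obius map carrying $\mathbb{D}$ onto the half-plane $\{\operatorname{Re}w<3/2\}$ (it sends $z=-1$ to $3/2$), confirming $f_0\in\mathcal{G}(1)$. This does not affect your argument, since membership of $f_0$ in $\mathcal{G}(\beta)$ and the value $\|S_{f_0}\|=2\beta(2+\beta)$ are already established in Theorem~\ref{Thm-q-0005}.
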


Theorem \ref{Thm-q-0005} and Theorem \ref{Thm-q-0001} immediately gives the following result for functions in $\mathcal{G}(\beta)$.
\begin{cor}
Let $0<\beta<\sqrt{2}-1$, and $f\in\mathcal{G}(\beta)$ be of the form \eqref{q-00001}. Then $f$ extends to an $\beta(2+\beta)$-quasiconformal mapping.
\end{cor}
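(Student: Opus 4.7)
The plan is to chain together the Schwarzian norm bound just established in Theorem \ref{Thm-q-0005} with the converse half of the Ahlfors--Weill--K\"uhnau extension result recorded in Theorem \ref{Thm-q-0001}. Since Theorem \ref{Thm-q-0005} already supplies the sharp inequality $||S_f||\le 2\beta(2+\beta)$ for every $f\in\mathcal{G}(\beta)$, the only work remaining in the corollary is to identify the correct dilatation parameter and to verify that it falls in the admissible range $[0,1)$ required by Theorem \ref{Thm-q-0001}.

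First I would set $k:=\beta(2+\beta)$, so that the estimate of Theorem \ref{Thm-q-0005} reads precisely $||S_f||\le 2k$, which is the hypothesis of the converse direction of Theorem \ref{Thm-q-0001}. Next I would check the admissibility condition $0\le k<1$. The inequality $\beta(2+\beta)<1$ rearranges to the quadratic $\beta^2+2\beta-1<0$, whose positive root is $-1+\sqrt{2}$; combined with $\beta>0$, this is exactly the stated hypothesis $0<\beta<\sqrt{2}-1$. Under this constraint $k\in(0,1)$, and Theorem \ref{Thm-q-0001} directly produces the claimed $k$-quasiconformal extension of $f$ to the Riemann sphere $\widehat{\mathbb{C}}$.

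There is essentially no obstacle in this argument; the corollary is pure bookkeeping that records the consequence of feeding the sharp norm bound into Theorem \ref{Thm-q-0001}. The only point worth highlighting is that $\sqrt{2}-1$ is the widest range achievable by this method, because for larger $\beta$ the constant $2\beta(2+\beta)$ exceeds $2$ and the hypothesis $||S_f||\le 2k$ with $k<1$ of Theorem \ref{Thm-q-0001} fails. Since Theorem \ref{Thm-q-0005} is already sharp (witnessed by the function $f_0$), enlarging the admissible range of $\beta$ would necessarily require a different technique rather than a refined norm estimate.
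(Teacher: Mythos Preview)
Your proposal is correct and follows exactly the paper's own approach: the corollary is stated as an immediate consequence of combining the Schwarzian norm bound of Theorem \ref{Thm-q-0005} with the converse direction of Theorem \ref{Thm-q-0001}, and your identification of $k=\beta(2+\beta)$ together with the verification that $k<1$ precisely when $0<\beta<\sqrt{2}-1$ is the only content of the argument.
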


\begin{thm}\label{Thm-q-0010}
For $-1/2\le \alpha\le 0$, let $f\in\mathcal{F}(\alpha)$ be of the form \eqref{q-00001}. Then the Schwarzian derivative $S_f(z)$ satisfies the following sharp inequality
$$|S_f(z)|\le \frac{2(1-\alpha)}{(1+\alpha)}\frac{(1+\alpha-\alpha|z|^2)}{(1-|z|^2)^2}\quad\text{for}~z\in\mathbb{D}.$$
Moreover, the equality occurs for some fixed $z_0\in \mathbb{D}$ with $-1<z_0<1$ for the function $f_{z_0}$ defined by
$$1+\frac{zf_{z_0}''(z)}{f_{z_0}'(z)}=\frac{1+(1-2\alpha)\omega_{z_0}(z)}{1-\omega_{z_0}(z)},$$
where
$$\omega_{z_0}(z)=-\frac{z(z-b)}{1-bz}\quad \text{with}\quad b=\frac{z_0(2+\alpha-\alpha z_0^2)}{1+\alpha+z_0^2-\alpha z_0^2}.$$
\end{thm}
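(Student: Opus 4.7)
The plan mirrors that of Theorem \ref{Thm-q-0005}. First, from the subordination \eqref{q-00015} I obtain $\omega\in\mathcal{B}_{0}$ with
$$1+\frac{zf''(z)}{f'(z)}=\frac{1+(1-2\alpha)\omega(z)}{1-\omega(z)},$$
which simplifies to $f''(z)/f'(z)=2(1-\alpha)\omega(z)/\bigl[z(1-\omega(z))\bigr]$. A routine differentiation then yields the analogue of \eqref{q-000001} for the class $\mathcal{F}(\alpha)$, namely
$$S_{f}(z)=\frac{2(1-\alpha)\bigl[z\omega'(z)-\omega(z)+\alpha\,\omega(z)^{2}\bigr]}{z^{2}(1-\omega(z))^{2}}.$$

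The second step is to set $\zeta(z)=\omega'(z)-\omega(z)/z$, so that $z\omega'(z)-\omega(z)=z\,\zeta(z)$, and apply Dieudonn\'{e}'s lemma to bound $|z\zeta(z)|$ by $(|z|^{2}-|\omega(z)|^{2})/(1-|z|^{2})$. Combining this with $|1-\omega(z)|\ge 1-|\omega(z)|$ and the triangle inequality, and exploiting that $\alpha\le 0$ (so $|\alpha|=-\alpha$ and the $\alpha\omega^{2}$ contribution combines cleanly with the Dieudonn\'{e} term), I get, with $r=|z|$ and $s=|\omega(z)|$,
$$|S_{f}(z)|\le \frac{2(1-\alpha)\bigl[r^{2}-s^{2}(1+\alpha-\alpha r^{2})\bigr]}{r^{2}(1-s)^{2}(1-r^{2})}.$$
Viewing the right-hand side as a function $h(s)$ on $[0,r]$, a short differentiation shows that the numerator of $h'(s)$ reduces to $2\bigl[r^{2}-s(1+\alpha-\alpha r^{2})\bigr]$, so $h$ attains its maximum at $s^{\ast}=r^{2}/(1+\alpha-\alpha r^{2})$. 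A direct check gives $s^{\ast}\le r$ for every $\alpha\in[-1/2,0]$, and substituting $s^{\ast}$ together with the identity $(1+\alpha-\alpha r^{2})-r^{2}=(1+\alpha)(1-r^{2})$ collapses the estimate to the stated bound.

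The step I expect to be most delicate is the verification of sharpness with the prescribed function $\omega_{z_{0}}(z)=-z(z-b)/(1-bz)$. I would first note that $\omega_{z_{0}}$ is a Blaschke product of degree two fixing the origin, so equality holds in Dieudonn\'{e}'s lemma. For $z_{0}\in(-1,1)\setminus\{0\}$ and real $b$, direct substitution then shows that $\omega_{z_{0}}(z_{0})$ equals the critical value $s^{\ast}=z_{0}^{2}/(1+\alpha-\alpha z_{0}^{2})$ precisely when $b$ takes the value stated in the theorem; in particular $\omega_{z_{0}}(z_{0})\in[0,1)$ is real, which forces $|1-\omega_{z_{0}}(z_{0})|=1-\omega_{z_{0}}(z_{0})$. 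A short computation of $\omega_{z_{0}}'(z_{0})$ shows that both $z_{0}\omega_{z_{0}}'(z_{0})-\omega_{z_{0}}(z_{0})$ and $\alpha\,\omega_{z_{0}}(z_{0})^{2}$ are nonpositive reals, so the triangle inequality for $|z\zeta+\alpha\omega^{2}|$ also becomes an equality at $z_{0}$. The main obstacle is to arrange all of these equality conditions simultaneously through the single parameter $b$; the algebra is driven by the equation $\omega_{z_{0}}(z_{0})=s^{\ast}$, which after clearing denominators reproduces the explicit formula $b=z_{0}(2+\alpha-\alpha z_{0}^{2})/(1+\alpha+z_{0}^{2}-\alpha z_{0}^{2})$.
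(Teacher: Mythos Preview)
Your proposal is correct and follows essentially the same route as the paper: both arguments pass through the subordination representation, the substitution $\zeta=\omega'-\omega/z$ together with Dieudonn\'{e}'s lemma, the estimate $|1-\omega|\ge 1-|\omega|$, and then the maximization of the resulting one-variable function $h(s)$ on $[0,|z|]$, locating the same critical point $s^{\ast}=r^{2}/(1+\alpha-\alpha r^{2})$ and simplifying via $(1+\alpha-\alpha r^{2})-r^{2}=(1+\alpha)(1-r^{2})$.

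The only place where your plan differs slightly from the paper is the sharpness verification: the paper simply substitutes $\omega_{z_{0}}$ into the Schwarzian formula and computes $S_{f_{z_{0}}}(z_{0})$ directly, whereas you propose to check that each inequality in the chain (Dieudonn\'{e}, triangle inequality, and $|1-\omega|\ge 1-|\omega|$) turns into an equality at $z_{0}$. Both are valid, and your derivation of $b$ from the equation $\omega_{z_{0}}(z_{0})=s^{\ast}$ is exactly what produces the stated formula. One point you should not omit: for $\omega_{z_{0}}$ to be a genuine Blaschke product (and hence for $f_{z_{0}}$ to lie in $\mathcal{F}(\alpha)$) you must verify $|b|<1$; the paper does this by observing that $b=b(z_{0})$ is strictly increasing on $(-1,1)$ with $b(\pm 1)=\pm 1$.
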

\begin{proof}
For $-1/2\le \alpha\le 0$, let $f\in\mathcal{F}(\alpha)$.  Then from \eqref{q-00015}, we have
$$1+\frac{zf''(z)}{f'(z)}\prec \frac{1+(1-2\alpha)z}{1-z}.$$
Thus, there exists an analytic function $\omega:\mathbb{D}\rightarrow\mathbb{D}$ with $\omega(0)=0$ such that
$$1+\frac{zf''(z)}{f'(z)}= \frac{1+(1-2\alpha)\omega(z)}{1-\omega(z)}.$$
A simple computation gives
$$\frac{f''(z)}{f'(z)}=\frac{2(1-\alpha)\omega(z)}{z(1-\omega(z))}$$
and hence,
\begin{align}\label{q-000010}
S_f(z)&=\left[\frac{f''(z)}{f'(z)}\right]^{'}-\frac{1}{2}\left[\frac{f''(z)}{f'(z)}\right]^2\\
&=2(1-\alpha)\frac{\omega'(z)}{z(1-\omega(z))^2}-2(1-\alpha)\frac{\omega(z)-\alpha\omega^2(z)}{z^2(1-\omega(z))^2}.\nonumber
\end{align}
Let us consider the transformation $\displaystyle\zeta(z)=\omega'(z)-\frac{\omega(z)}{z}$. By \eqref{q-00005}, the function $\zeta$ varies over the closed disk
$$|\zeta|\le \frac{|z|^2-|\omega|^2}{|z|(1-|z|^2)}$$
for fixed $|z|<1$. Using the transformation $\zeta(z)$ in  \eqref{q-000010}, we obtain
$$S_f(z)=2(1-\alpha)\frac{\alpha\omega^2(z)}{z^2(1-\omega(z))^2}+2(1-\alpha)\frac{\zeta(z)}{z(1-\omega(z))^2}$$
and consequently,
\begin{align}\label{q-000015}
|S_f(z)|&\le 2(1-\alpha)\frac{-\alpha|\omega(z)|^2}{|z|^2|1-\omega(z)|^2}+2(1-\alpha)\frac{|\zeta(z)|}{|z||1-\omega(z)|^2}\\
&\le 2(1-\alpha)\frac{-\alpha|\omega(z)|^2}{|z|^2|1-\omega(z)|^2}+2(1-\alpha)\frac{|z|^2-|\omega^2(z)|}{|z|^2(1-|z|^2)|1-\omega(z)|^2}.\nonumber
\end{align}
For $0<s:=|\omega(z)|\le |z|<1$, we have
\begin{align}\label{q-000020}
|S_f(z)|&\le 2(1-\alpha)\frac{-\alpha s^2}{|z|^2(1-s)^2}+2(1-\alpha)\frac{|z|^2-s^2}{|z|^2(1-|z|^2)(1-s)^2}\\
&=2(1-\alpha)\frac{|z|^2-s^2(1+\alpha-\alpha |z|^2)}{|z|^2(1-|z|^2)(1-s)^2}=h(s),\nonumber
\end{align}
where $$h(s)=2(1-\alpha)\frac{|z|^2-s^2(1+\alpha-\alpha |z|^2)}{|z|^2(1-|z|^2)(1-s)^2},\quad 0<s\le |z|.$$
Therefore,
\begin{equation*}
h'(s)=\frac{4(1-\alpha)}{|z|^2(1-|z|^2)}\frac{|z|^2-s(1+\alpha-\alpha |z|^2)}{(1-s)^3}.
\end{equation*}
Thus, $h'(s)=0$ implies that $s=|z|^2/(1+\alpha-\alpha |z|^2)$, which lies in $(0,|z|)$ as 
$$\frac{|z|^2}{1+\alpha-\alpha |z|^2}<|z| \iff (1-|z|)(1+\alpha(1+|z|))>0.$$
A simple calculation shows that $h'(0)>0$ and $h'(|z|)<0$. This lead us to conclude that the function $h$ attain its maximum at $|z|^2/(1+\alpha-\alpha |z|^2)$. Consequently, from \eqref{q-000020}, we have
$$|S_f(z)|\le h\left(\frac{|z|^2}{1+\alpha-\alpha |z|^2}\right)= \frac{2(1-\alpha)}{(1+\alpha)}\frac{(1+\alpha-\alpha|z|^2)}{(1-|z|^2)^2}.$$

To show that the estimate \eqref{q-000015} is sharp, let $z_0\in\mathbb{D}$ with $-1<z_0<1$ be fixed and consider the function $f_{z_0}$ defined by
\begin{equation}\label{q-000025}
1+\frac{zf''_{z_0}(z)}{f'_{z_0}(z)}=\frac{1+(1-2\alpha)\omega_{z_0}(z)}{1-\omega_{z_0}(z)},
\end{equation}
where
$$\omega_{z_0}(z)=-\frac{z(z-b)}{1-bz}\quad \text{with}\quad b=\frac{z_0(2+\alpha-\alpha z_0^2)}{1+\alpha+z_0^2-\alpha z_0^2}.$$
Considering $b$ as a function of $z_0$ in $(-1,1)$, we note that for each $-1/2\le \alpha\le 0$,
$$b'(z_0)=\frac{(1-|z_0|^2)\{\alpha^2(1-|z_0|^2)+(2\alpha+1)+(1+\alpha+\alpha|z_0|^2)\}}{(1+\alpha+z_0^2-\alpha z_0^2)^2}>0,~~ z_0\in (-1,1).$$
Also, $b(-1)=-1$ and $b(1)=1$. This lead us to conclude that $|b|<1$ and so $\omega_{z_0}$ is a Blaschke product of degree $2$ fixing $0$. This also shows that $f_{z_0}\in\mathcal{F}(\alpha)$ for every fixed $z_0\in (-1,1)$.\\

For such an $f_{z_0}$, we compute $S_{f_{z_0}}(z_0)$ using  \eqref{q-000010} as follows
\begin{align*}
S_{f_{z_0}}(z_0)
&=\left[\frac{f''_{z_0}(z)}{f'_{z_0}(z)}\right]^{'}-\frac{1}{2}\left[\frac{f''_{z_0}(z)}{f'_{z_0}(z)}\right]^2~\Bigg\vert_{z=z_0}\\
&=2(1-\alpha)\left[\frac{\omega'_{z_0}(z)}{z(1-\omega_{z_0}(z))^2} -\frac{\omega_{z_0}(z) -\alpha\omega_{z_0}^2(z)}{z^2(1-\omega_{z_0}(z))^2}\right] ~\Bigg\vert_{z=z_0}\\
&=-\frac{2(1-\alpha)}{(1+\alpha)}\frac{(1+\alpha-\alpha z_0^2)}{(1-z_0^2)^2}.
\end{align*}
Therefore,
\begin{equation}\label{q-000030}
|S_{f_{z_0}}(z_0)|=\frac{2(1-\alpha)}{(1+\alpha)}\frac{(1+\alpha-\alpha z_0^2)}{(1-z_0^2)^2}.
\end{equation}
This completes the proof.
\end{proof}

\begin{thm}\label{Thm-q-0015}
For $-1/2\le \alpha\le 0$, let $f\in\mathcal{F}(\alpha)$ be of the form \eqref{q-00001}. Then the Schwarzian norm $||S_f(z)||$ satisfies
$$||S_f||\le \frac{2(1-\alpha)}{1+\alpha}$$
and the estimate is best possible.
\end{thm}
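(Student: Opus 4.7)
The plan is to derive this statement as an immediate corollary of the pointwise estimate in Theorem \ref{Thm-q-0010}, and then verify sharpness by letting the test point in the associated extremal family tend to the boundary.

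First, I would multiply the bound of Theorem \ref{Thm-q-0010} by $(1-|z|^2)^2$ to get
$$(1-|z|^2)^2|S_f(z)|\le \frac{2(1-\alpha)}{1+\alpha}\bigl(1+\alpha-\alpha|z|^2\bigr),\qquad z\in\mathbb{D}.$$
The right-hand side is a function of $|z|^2$ alone. Since $-1/2\le\alpha\le 0$, the coefficient $-\alpha$ is nonnegative, so $1+\alpha-\alpha|z|^2$ is nondecreasing on $[0,1)$, with $\sup_{|z|<1}(1+\alpha-\alpha|z|^2)=1$. Taking the supremum over $z\in\mathbb{D}$ therefore yields
$$\|S_f\|=\sup_{z\in\mathbb{D}}(1-|z|^2)^2|S_f(z)|\le \frac{2(1-\alpha)}{1+\alpha}.$$

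For sharpness, I would invoke the family $\{f_{z_0}\}_{z_0\in(-1,1)}\subset\mathcal{F}(\alpha)$ constructed in Theorem \ref{Thm-q-0010}. From equation \eqref{q-000030} there,
$$(1-z_0^2)^2\,|S_{f_{z_0}}(z_0)|=\frac{2(1-\alpha)}{1+\alpha}\bigl(1+\alpha-\alpha z_0^2\bigr).$$
Since $\|S_{f_{z_0}}\|\ge (1-z_0^2)^2|S_{f_{z_0}}(z_0)|$, letting $z_0\to 1^-$ gives
$$\liminf_{z_0\to 1^-}\|S_{f_{z_0}}\|\ge \frac{2(1-\alpha)}{1+\alpha},$$
showing no smaller universal constant can replace $2(1-\alpha)/(1+\alpha)$.

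The proof is essentially a maximization in the radial variable, so I do not anticipate a serious obstacle: the only thing to check carefully is the monotonicity of $1+\alpha-\alpha|z|^2$, which relies in an essential way on the hypothesis $\alpha\le 0$. The mild subtlety worth noting is that the extremal value of the norm is not attained by any single function in $\mathcal{F}(\alpha)$; it is only approached in the limit along the family $f_{z_0}$ as $z_0\to 1^-$, so the sharpness assertion should be phrased as ``the estimate is best possible'' rather than as ``equality occurs for a specific extremal function''.
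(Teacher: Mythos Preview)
Your proposal is correct and follows essentially the same route as the paper: multiply the pointwise bound of Theorem~\ref{Thm-q-0010} by $(1-|z|^2)^2$, take the supremum using $\sup_{|z|<1}(1+\alpha-\alpha|z|^2)=1$, and then verify sharpness via the family $f_{z_0}$ and equation~\eqref{q-000030} by letting $z_0\to 1^-$. Your added remarks on the monotonicity (requiring $\alpha\le 0$) and on the non-attainment of the extremal constant are accurate refinements but do not change the argument.
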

\begin{proof}
For $f\in\mathcal{F}(\alpha)$, from Theorem \ref{Thm-q-0010}, we have
$$|S_f(z)|\le \frac{2(1-\alpha)}{(1+\alpha)}\frac{(1+\alpha-\alpha|z|^2)}{(1-|z|^2)^2}.$$
Therefore,
\begin{align*}
||S_f||&=\sup\limits_{z\in\mathbb{D}}(1-|z|^2)^2|S_f(z)|\\
&\le\frac{2(1-\alpha)}{(1+\alpha)} \sup\limits_{z\in\mathbb{D}}(1+\alpha-\alpha|z|^2)=\frac{2(1-\alpha)}{1+\alpha}.
\end{align*}
To show that the estimate is best possible, we consider the function $f_{z_0}(z)$, $-1<z_0<1$ defined by \eqref{q-000025}. Then from \eqref{q-000030}, we have
$$(1-|z_0|^2)^2|S_{f_{z_0}}(z_0)|=\frac{2(1-\alpha)(1+\alpha-\alpha z_0^2)}{(1+\alpha)}\rightarrow \frac{2(1-\alpha)}{1+\alpha}\quad\text{as}~z_0\rightarrow 1^-.$$
This shows that the estimate is best possible.\\[2mm]
\end{proof}

If we choose $\alpha=0$ in Theorem \ref{Thm-q-0015}, we get the Schwarzian norm  estimate for the class of convex functions, which was first proved by Nehari \cite{Nehari-1976}.
\begin{cor}
If $f\in\mathcal{F}(0)=:\mathcal{C}$ be of the form \eqref{q-00001}, then the Schwarzian norm satisfies the sharp inequality $$||S_f||\le 2.$$
\end{cor}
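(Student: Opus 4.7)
The plan is to derive this result as an almost immediate corollary of Theorem \ref{Thm-q-0010}, which already provides a pointwise bound on $|S_f(z)|$; what remains is simply to compute the supremum of $(1-|z|^2)^2|S_f(z)|$ and then produce a one-parameter family of extremal functions realizing the bound asymptotically.

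First I would invoke Theorem \ref{Thm-q-0010} to get, for any $f\in\mathcal{F}(\alpha)$ and any $z\in\mathbb{D}$,
$$(1-|z|^2)^2|S_f(z)|\le \frac{2(1-\alpha)}{1+\alpha}\,(1+\alpha-\alpha|z|^2).$$
The factor $(1-|z|^2)^2$ in the numerator cancels the denominator of the bound in Theorem \ref{Thm-q-0010}, leaving only the polynomial $1+\alpha-\alpha|z|^2$. Because $-1/2\le\alpha\le 0$ forces $-\alpha\ge 0$, the quantity $1+\alpha-\alpha|z|^2$ is non-decreasing in $|z|$ on $[0,1)$ and its supremum over $\mathbb{D}$ equals its limit as $|z|\to 1^-$, namely $1+\alpha-\alpha=1$. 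Taking the supremum therefore yields
$$\|S_f\|\le\frac{2(1-\alpha)}{1+\alpha},$$
which is the claimed inequality.

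For sharpness I would use the extremal family $f_{z_0}$ built in Theorem \ref{Thm-q-0010}, with $z_0\in(-1,1)$. There it was shown that $f_{z_0}\in\mathcal{F}(\alpha)$ (via the Blaschke-product verification for $\omega_{z_0}$), and that the identity
$$(1-z_0^2)^2|S_{f_{z_0}}(z_0)|=\frac{2(1-\alpha)(1+\alpha-\alpha z_0^2)}{1+\alpha}$$
holds. Letting $z_0\to 1^-$, the right-hand side tends to $2(1-\alpha)/(1+\alpha)$, and since $\|S_{f_{z_0}}\|\ge (1-z_0^2)^2|S_{f_{z_0}}(z_0)|$, this shows no constant smaller than $2(1-\alpha)/(1+\alpha)$ can work.

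There is essentially no obstacle here: the only place one must be careful is the monotonicity step, where the sign hypothesis $\alpha\le 0$ is precisely what makes the quadratic factor $1+\alpha-\alpha|z|^2$ maximized at the boundary rather than at the origin; if $\alpha$ were positive this reduction would fail. All the heavy lifting (the Dieudonn\'e-lemma estimate, the maximization over $s=|\omega(z)|$, and the construction of the extremal $f_{z_0}$) has already been done in Theorem \ref{Thm-q-0010}, so the present theorem reduces to a one-line supremum computation plus a boundary limit.
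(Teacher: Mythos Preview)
Your argument is correct and matches the paper's approach exactly: the paper derives Theorem~\ref{Thm-q-0015} from Theorem~\ref{Thm-q-0010} by the same supremum computation and the same sharpness argument via the family $f_{z_0}$, and then obtains the present corollary by setting $\alpha=0$. The only cosmetic difference is that you reproduce the general-$\alpha$ argument rather than simply citing Theorem~\ref{Thm-q-0015} and substituting $\alpha=0$; note in particular that for $\alpha=0$ the factor $1+\alpha-\alpha|z|^2$ is identically~$1$, so the monotonicity discussion is unnecessary and each $f_{z_0}$ already attains the bound $2$ exactly (not just in the limit $z_0\to 1^-$).
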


For $z_1,z_2\in\mathbb{D}$, let the hyperbolic metric $\lambda(z_1,z_2)$ be defined by
$$\lambda(z_1,z_2)=\frac{1}{2}\log\frac{1+\rho(z_1,z_2)}{1-\rho(z_1,z_2)},\quad\text{where}~ \rho(z_1,z_2)=\left|\frac{z_1-z_2}{1-\bar{z_1}z_2}\right|.$$
We also define the following quantity for an analytic and locally univalent function $f$ in $\mathbb{D}$:
$$\Delta_f(z_1,z_2):=\frac{|f(z_1)-f(z_2)|}{(1-|z_1|^2)(1-|z_2|^2)\sqrt{|f'(z_1)||f'(z_2)|}},\quad z_1,z_2\in \mathbb{D}.$$

In order to prove the two point distortion theorem for the classes $\mathcal{G}(\beta)$ and $\mathcal{F}(\alpha)$, we need to state the most classical result about two point distortion, which was proved by Chuaqui {\it et al.} \cite{Chuaqui-Duren-Ma-Mejia-Minda-Osgood-2011}.

\begin{customthm}{B}\label{Thm-q-0020}\cite[Theorem 1.]{Chuaqui-Duren-Ma-Mejia-Minda-Osgood-2011}
Let $f$ be analytic and locally univalent in $\mathbb{D}$ and suppose that the bound $||S_f||\le 2(1+\delta^2)$ holds for some $\delta>0$. Then
\begin{equation}\label{q-000035}
\Delta_f(z_1,z_2)\ge \frac{1}{\delta}\sin\left(\delta\lambda(z_1,z_2)\right)
\end{equation}
for all $z_1,z_2\in\mathbb{D}$ with $\lambda(z_1,z_2)\le \pi/\delta$, and
\begin{equation}\label{q-000040}
\Delta_f(z_1,z_2)\le \frac{1}{\sqrt{2+\delta^2}}\sinh\left(\sqrt{2+\delta^2}~\lambda(z_1,z_2)\right)
\end{equation}
for all $z_1,z_2\in\mathbb{D}$. Each of the inequalities \eqref{q-000035} and \eqref{q-000040} is sharp; for each pair of points $z_1$ and $z_2$ in the specified range, equality occurs for some function $f$ with $||S_f||\le 2(1+\delta^2)$. Equality holds in \eqref{q-000035} precisely for $f=T\circ F\circ \sigma$ and in \eqref{q-000040} for $f=T\circ G\circ \sigma$, where $F$ and $G$ are defined by
\begin{equation}\label{q-000045}
F(z)=\left(\frac{1+z}{1-z}\right)^{i\delta}\quad\text{and}\quad G(z)=\left(\frac{1+z}{1-z}\right)^{\sqrt{2+\delta^2}},
\end{equation}
$\sigma$ is the M\"{o}bius automorphism of $\mathbb{D}$ with $\sigma(z_1)=0$ and $\sigma(z_2)>0$, and $T$ is an arbitrary M\"{o}bius transformation. For each such function $f$, equality holds along the entire (admissible portion of the) hyperbolic geodesic through $z_1$ and $z_2$. Conversely, if either inequality holds for all points $z_1$ and $z_2$ in the specified range, then $||S_f||\le 2(1+\delta^2)$.
\end{customthm}

Now, in view of Theorem \ref{Thm-q-0005}, Theorem \ref{Thm-q-0015} and Theorem \ref{Thm-q-0020}, we obtain a pair of two-point distortion theorems for functions in $\mathcal{G}(\beta)$ and $\mathcal{F}(\alpha)$ for certain range of $\beta$ and $\alpha$:
\begin{cor}
Let $\beta>\sqrt{2}-1$ and $f\in\mathcal{G}(\beta)$ be of the form \eqref{q-00001}. Then
$$\Delta_f(z_1,z_2)\ge \frac{1}{\sqrt{\beta^2+2\beta-1}}\sin\left(\sqrt{\beta^2+2\beta-1}~\lambda(z_1,z_2)\right)$$
for all $z_1,z_2\in\mathbb{D}$ with $\lambda(z_1,z_2)\le \pi/\sqrt{\beta^2+2\beta-1}$ and
$$\Delta_f(z_1,z_2)\le \frac{1}{1+\beta}\sinh\left((1+\beta)\lambda(z_1,z_2)\right)$$
for all $z_1,z_2\in\mathbb{D}$. Both inequalities are sharp.
\end{cor}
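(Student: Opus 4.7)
The plan is to obtain the corollary as an immediate consequence of Theorem \ref{Thm-q-0005} and Theorem \ref{Thm-q-0020}. Theorem \ref{Thm-q-0005} provides the sharp estimate $\|S_f\| \le 2\beta(2+\beta)$ for every $f \in \mathcal{G}(\beta)$. To fit this into the hypothesis $\|S_f\| \le 2(1+\delta^2)$ of Theorem \ref{Thm-q-0020}, I would match the constants by setting
\[
\delta := \sqrt{\beta^2 + 2\beta - 1}.
\]
For $\delta$ to be a positive real number, one needs $\beta^2 + 2\beta - 1 > 0$, which is equivalent to $\beta > \sqrt{2} - 1$. This is precisely the hypothesis of the corollary and explains exactly where the threshold $\sqrt{2}-1$ comes from.

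Next, I would carry out the (routine) simplification of the two constants that appear in \eqref{q-000035} and \eqref{q-000040} of Theorem \ref{Thm-q-0020}. With the above choice of $\delta$, the lower-bound coefficient is $\delta = \sqrt{\beta^2 + 2\beta - 1}$, while for the upper bound,
\[
\sqrt{2 + \delta^2} \,=\, \sqrt{2 + \beta^2 + 2\beta - 1} \,=\, \sqrt{(1+\beta)^2} \,=\, 1 + \beta.
\]
Plugging these into \eqref{q-000035} and \eqref{q-000040} delivers the two inequalities of the corollary, valid in the stated hyperbolic ranges, since the assumption $\|S_f\| \le 2\beta(2+\beta) = 2(1+\delta^2)$ from Theorem \ref{Thm-q-0005} is exactly the hypothesis of Theorem \ref{Thm-q-0020}.

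For the sharpness claim, the strategy is to observe that the extremal function $f_0(z) = (1-(1-z)^{1+\beta})/(1+\beta)$ from Theorem \ref{Thm-q-0005} belongs to $\mathcal{G}(\beta)$ and attains $\|S_{f_0}\| = 2\beta(2+\beta) = 2(1+\delta^2)$. Consequently, the Schwarzian norm bound used in applying Theorem \ref{Thm-q-0020} cannot be replaced by a smaller quantity on $\mathcal{G}(\beta)$, so the constants $\sqrt{\beta^2+2\beta-1}$ and $1+\beta$ in the two distortion inequalities are optimal: any strict improvement of either inequality for all $f \in \mathcal{G}(\beta)$ would, by the converse direction of Theorem \ref{Thm-q-0020}, force a smaller Schwarzian norm bound on $\mathcal{G}(\beta)$, contradicting the sharpness part of Theorem \ref{Thm-q-0005}.

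I do not anticipate any real obstacle in this argument; essentially everything is mechanical once the correspondence $\delta^2 = \beta^2 + 2\beta - 1$ is identified. The only point that merits care is the sharpness paragraph, where one must invoke the converse in Theorem \ref{Thm-q-0020} (rather than produce an explicit extremal function for each pair $z_1,z_2 \in \mathbb{D}$) in order to transfer optimality from the Schwarzian norm bound to the two-point distortion constants.
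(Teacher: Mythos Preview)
Your proposal is correct and follows exactly the route the paper indicates: the paper gives no explicit proof, merely stating that the corollary follows ``in view of Theorem~\ref{Thm-q-0005} \ldots\ and Theorem~\ref{Thm-q-0020}'', and you have filled in precisely the parameter identification $\delta=\sqrt{\beta^2+2\beta-1}$ (whence $\sqrt{2+\delta^2}=1+\beta$ and the threshold $\beta>\sqrt{2}-1$) that makes this work. Your sharpness argument via the converse clause of Theorem~\ref{Thm-q-0020} applied to the extremal $f_0$ is in fact more than the paper supplies, and is the right way to proceed here since the explicit extremals $T\circ F\circ\sigma$ and $T\circ G\circ\sigma$ from Theorem~\ref{Thm-q-0020} need not lie in $\mathcal{G}(\beta)$.
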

\begin{cor}
Let $-1/2\le \alpha<0$ and $f\in\mathcal{F}(\alpha)$ be of the form \eqref{q-00001}. Then
$$\Delta_f(z_1,z_2)\ge \sqrt{\frac{1+\alpha}{-2\alpha}}\sin\left(\sqrt{\frac{-2\alpha}{1+\alpha}}~\lambda(z_1,z_2)\right)$$
for all $z_1,z_2\in\mathbb{D}$ with $\lambda(z_1,z_2)\le \pi\sqrt{(1+\alpha)/(-2\alpha)}$ and
$$\Delta_f(z_1,z_2)\le \sqrt{\frac{1+\alpha}{2}}\sinh\left(\sqrt{\frac{2}{1+\alpha}}~\lambda(z_1,z_2)\right)$$
for all $z_1,z_2\in\mathbb{D}$. Both inequalities are sharp.
\end{cor}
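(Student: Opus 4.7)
The plan is to derive both inequalities as a direct application of Theorem \ref{Thm-q-0020} once we feed in the sharp Schwarzian norm bound supplied by Theorem \ref{Thm-q-0015}. Since $f\in\mathcal{F}(\alpha)$ satisfies
$$\|S_f\|\le \frac{2(1-\alpha)}{1+\alpha},$$
the first step is to solve $2(1+\delta^{2})=2(1-\alpha)/(1+\alpha)$ for the auxiliary parameter $\delta$ demanded by Theorem \ref{Thm-q-0020}. A one-line computation gives
$$\delta^{2}=\frac{1-\alpha}{1+\alpha}-1=\frac{-2\alpha}{1+\alpha},$$
and I would check that this is legitimate: for $-1/2\le\alpha<0$ one has $-2\alpha>0$ and $1+\alpha\ge 1/2>0$, so $\delta^{2}>0$ and we may set $\delta=\sqrt{-2\alpha/(1+\alpha)}>0$ as required by the hypothesis of Theorem \ref{Thm-q-0020}.

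Next I would substitute this $\delta$ into each of the two inequalities of Theorem \ref{Thm-q-0020}. The constants simplify cleanly because
$$\frac{1}{\delta}=\sqrt{\frac{1+\alpha}{-2\alpha}},\qquad 2+\delta^{2}=\frac{2(1+\alpha)-2\alpha}{1+\alpha}=\frac{2}{1+\alpha},$$
so $\sqrt{2+\delta^{2}}=\sqrt{2/(1+\alpha)}$ and $1/\sqrt{2+\delta^{2}}=\sqrt{(1+\alpha)/2}$. Plugging these into \eqref{q-000035} and \eqref{q-000040} yields exactly the sine and sinh inequalities stated in the corollary, with the admissibility range $\lambda(z_1,z_2)\le\pi/\delta=\pi\sqrt{(1+\alpha)/(-2\alpha)}$ for the lower bound. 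This is mostly bookkeeping; the only care needed is to make sure the algebraic simplifications are consistent, in particular that the factor $1/\delta$ multiplying the sine is rewritten in the form $\sqrt{(1+\alpha)/(-2\alpha)}$ chosen in the statement.

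For sharpness I would argue as follows. Theorem \ref{Thm-q-0020} asserts that the two inequalities are tight across the whole class $\{f:\|S_f\|\le 2(1+\delta^{2})\}$, with equality attained by the explicit functions $T\circ F\circ\sigma$ and $T\circ G\circ\sigma$ built from \eqref{q-000045}. By Theorem \ref{Thm-q-0015} (and the construction of $f_{z_0}$ used to prove its sharpness), the constant $2(1-\alpha)/(1+\alpha)$ is the least possible upper bound for $\|S_f\|$ over $\mathcal{F}(\alpha)$; any strictly smaller choice of $\delta$ in Theorem \ref{Thm-q-0020} would fail already for the sequence $\{f_{z_0}\}_{z_0\to 1^{-}}$ at points of the diameter $(-1,1)$. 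Hence the constants appearing in the corollary cannot be improved, and the sharpness follows in a limiting sense along $\{f_{z_0}\}$ paired with hyperbolic geodesics through the extremal point.

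The bulk of the work is essentially mechanical; the step I expect to be mildly delicate is the sharpness claim, since no single function in $\mathcal{F}(\alpha)$ attains $\|S_f\|=2(1-\alpha)/(1+\alpha)$, so the equality in \eqref{q-000035}--\eqref{q-000040} is approached along the extremal sequence $f_{z_0}$ rather than realized by a specific member of $\mathcal{F}(\alpha)$. All other elements of the proof reduce to the algebraic identity $2+\delta^{2}=2/(1+\alpha)$ and a direct citation of Theorems \ref{Thm-q-0015} and \ref{Thm-q-0020}.
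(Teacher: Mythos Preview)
Your proposal is correct and follows exactly the route the paper takes: the corollary is stated there as an immediate consequence of Theorem~\ref{Thm-q-0015} and Theorem~\ref{Thm-q-0020}, and your computation $\delta^{2}=-2\alpha/(1+\alpha)$, $2+\delta^{2}=2/(1+\alpha)$ is precisely the substitution required. Your discussion of sharpness via the extremal sequence $f_{z_0}$ and the converse clause of Theorem~\ref{Thm-q-0020} is in fact more careful than what the paper provides, which simply asserts sharpness without further argument.
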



\vspace{4mm}
\textbf{Data availability:} Data sharing not applicable to this article as no data sets were generated or analyzed during the current study.\\

\textbf{Authors contributions:}  All authors contributed equally to the investigation of the problem and the order of the authors is given alphabetically according to the surname. All authors read and approved the final manuscript.\\

\textbf{Acknowledgement:} The second named author thanks the University Grants Commission for the financial support through UGC Fellowship (Grant No. MAY2018-429303).

\end{document}